\documentclass[12pt]{amsart}
\usepackage{a4wide}
\usepackage{amsmath,amssymb,amsthm}
\usepackage{amsfonts,amsthm,amsmath,amssymb,amscd,color}
\usepackage{graphicx}
\usepackage{ulem}
\usepackage{enumerate}
\usepackage{parskip}
\usepackage{graphicx}
\usepackage{verbatim} 

\newcommand{\RR}{\mathbb{R}}

\newcommand{\NN}{\mathbb{N}}

\newcommand{\eps}{\varepsilon}

\newcommand{\CA}{\mathcal{A}}
\newcommand{\CF}{\mathcal{F}}

\newcommand{\CP}{\mathcal{P}}
\newcommand{\CalC}{\mathcal{C}}
\newcommand{\CM}{{\mathcal{M}}}
\newcommand{\CE}{{\mathcal{E}}}
\newcommand{\CDD}{{\mathcal{D}}}

\newcommand{\BL}{\mathrm{BL}}
\newcommand{\TV}{\mathrm{TV}}

\newtheorem{thrm}{Theorem}[section]
\newtheorem{prop}[thrm]{Proposition}
\newtheorem{lemma}[thrm]{Lemma}
\newtheorem{clry}[thrm]{Corollary}

\newtheorem{example}[thrm]{Example}
\newtheorem{remark}[thrm]{Remark}

\newcommand{\smfrac}[2]{\mbox{$\frac{#1}{#2}$}}
\newcommand{\pair}[2]{\left\langle #1 , #2 \right\rangle}

\newcommand{\n}{{(n)}}
\newcommand{\cco}{{\overline{\mathrm{co}}}}


\DeclareMathOperator{\supp}{supp}

\newcommand{\Itr}{\textup{Int}}

\def\d{{\rm d}}
\def\Lip{{\rm Lip}}
\def\Int{{\rm Int}}


\begin{document}

\title[Equicontinuous families of Markov operators]{Equicontinuous families of Markov operators in view of asymptotic stability}
\author{Sander C. Hille}
\address{Mathematical Institute, Leiden University, P.O. Box 9512, 2300 RA Leiden, The Netherlands, (SH,MZ)}
\email{\{shille,m.a.ziemlanska\}@math.leidenuniv.nl}
\author{Tomasz Szarek}
\address{Institute of Mathematics, University of Gda\'nsk, Wita Stwosza
  57, 80-952 Gda\'nsk, Poland, (TS)}
\email{szarek@intertele.pl}
\author{Maria A. Ziemla\'nska}

\date\today

\subjclass[2000]{37A30, 60J05} 
\keywords{Markov operator, asymptotic stability, e-property, Ces\`aro e-property, equicontinuity, tightness}
\thanks{The work of Maja Ziemla\'nska has been partially supported by a Huygens Fellowship of Leiden University. The work of Tomasz Szarek has been supported by the National Science Centre
of Poland, grant number 2016/21/B/ST1/00033.}

\maketitle


\begin{abstract}
Relation between equicontinuity -- the so-called e--property and stability of Markov operators is studied. In particular, it is shown that any asymptotically stable Markov operator with an invariant measure such that the interior of its support is non-empty satisfies the e--property.
\end{abstract}


\section{Introduction}

This paper is centered around two concepts of equicontinuity for Markov operators defined on probability measures on Polish spaces: the {\it e-property} and the {\it Ces\`aro e-property}. Both appeared as a condition (among others) in the study of ergodicity of Markov operators. In particular they are very useful in proving existence of a unique invariant measure and its {\it asymptotic stability}: at whatever probability measure one starts, the iterates under the Markov operator will weakly converge to the invariant measure. The first concept appeared in \cite{Lasota-Szarek:2006,Szarek-Worm:2012} while the second was introduced  in \cite{Worm-thesis:2010} as a theoretical generalisation of the first and allowed the author to extend various results by replacing the e-property condition by apparently weaker the Ces\`aro e-property condition, among others. 


Interest in equicontinuous families of Markov operators existed already before the introduction of the e-property. Jamison \cite{Jamison:1964}, working on compact metric state spaces, introduced the concepts of (dual) Markov operators on the continuous functions that are `uniformly stable' or `uniformly stable in mean' to obtain a kind of asymptotic stability results in this setting. Meyn and Tweedie \cite{Meyn-Tweedie:2009} introduced the so-called `{\it e-chains}' on locally compact Hausdorff topological state spaces, for similar purposes. See also \cite{Zaharopol:2014} for results in a locally compact metric setting.

The above mentioned concepts were used in proving ergodicity for some Markov chains (see \cite{Stettner: 1994, Czapla: 2012, CzaplaHorbacz: 2014, EsS-vonRenesse: 2012, GongLiu: 2015, Komorowksi_ea:2010, WW: 2017}).

It is worth mentioning here that similar concepts appear in  the study of mean equicontinous dynamical systems mainly on compact spaces (see for instance \cite{LiTuJe: 2015}). However it must be stressed here that our space of Borel probability measures defined on some Polish space is non-compact.

Studing the e--property the natural question arose whether any asymptotically stable Markov operator satisfies this property. Proposition 6.4.2 in \cite{Meyn-Tweedie:2009} says that this holds when the phase space is compact. In particular, the authors showed that the stronger e--chain property is satisfied. Unfortunately, the proof contains a gap and it is quite easy to construct an example showing that some additional assumptions must be then added. 

On the other hand, striving to repair the gap of the Meyn-Tweedie result mentioned above, we show that any asymptotically stable Markov operator with an invariant measure such that the interior of its support is nonempty satisfies the e--property.

\section{Preliminaries}

Let $(S, d)$ be a Polish space. By $B(x,r)$ we denote the open ball in $(S,d)$ of radius $r$, centered at $x\in S$ and $\partial B(x,r)$ denotes its boundary. Further $\overline{E}$, $\Int_S E$ denote the closure of $E\subset S$ and the interior of $E$, respectively. By $C_b(S)$ we denote the vector space of all bounded real-valued continuous functions  on $S$ and by $B_b(S)$ all bounded real-valued Borel measurable functions, both equipped with the supremum norm $|\cdot|$. By $L_b(S)$  we denote the subspace of $C_b(S)$ of all bounded Lipschitz functions (for the metric $d$ on $S$). For $f\in L_b(S)$, $\Lip f$ denotes the Lipschitz constant of $f$.

By $\CM(S)$  we denote the family of all finite Borel measures on $S$ and  by $\CP(S)$ the subfamily of all probability measures in $\CM (S)$. For $\mu\in\CM (S)$, its {\it support} is the set
\[
\supp\mu :=\{x\in S:  \mu(B(x,r))>0\,\,\mbox{for all}\ r>0\}.
\]
An operator $P: \CM (S)\to \CM (S)$ is called a {\it Markov operator} (on $S$) if it satisfies the following two conditions:
\begin{enumerate}
\item[({\it i})] {\it (Positive linearity)} $P(\lambda_1\mu_1+\lambda_2\mu_2)=\lambda_1 P\mu_1+\lambda_2 P\mu_2$\\
for $\lambda_1, \lambda_2\geq 0$; $\mu_1, \mu_2\in\CM(S)$;
\item[({\it ii})] {\it (Preservation of the norm)} $P\mu (S)=\mu (S)$ for $\mu\in\CM(S)$.
\end{enumerate}
A measure $\mu_*$ is called {\it invariant} if $P\mu_*=\mu_*$. A Markov operator $P$ is {\it asymptotically stable} if there exists a unique invariant measure $\mu_*\in\CP(S)$ such that $P^n\mu\to\mu_*$ weakly as $n\to\infty$ for every $\mu\in\CP(S)$.

For brevity we shall use the notation:
$$
\langle f, \mu\rangle :=\int_S f(x)\mu (\d x)\qquad\mbox{for}\ f\in B_b(S),\ \mu\in\CM (S).
$$

A Markov operator $P$ is {\it regular} if there exists a linear operator $U: B_b(S)\to B_b(S)$ such that 
\[
\pair{f}{P\mu} = \pair{Uf}{\mu}\qquad \mbox{for all}\ f\in B_b(S),\ \mu\in\CM(S).
\]
The operator U is called the {\it dual operator} of $P$. A regular Markov operator is a {\it Feller operator} if its dual operator $U$ maps $C_b(S)$ into itself.
Equivalently, $P$ is Feller if it is continuous in the weak topology (cf. \cite{Worm-thesis:2010}, Proposition 3.2.2). 

A Feller operator $P$ satisfies the {\it e--property} at $z\in S$ if for any $f\in L_b(S)$ we have 
\begin{equation}\label{e2_9.04.17}
\lim_{x\to z} \limsup_{n\to\infty}|U^n f (x) - U^n f (z)|=0,
\end{equation}
i.e., if the family of iterates $\{U^n f: n\in\mathbb N\}$ is equicontinuous at $z\in S$. We say that a Feller operator satisfies the {\it e--property} if it satisfies it at any $z\in S$.

D. Worm slightly generalized the e--property introducing the Ces\'aro e--property (see  \cite{Worm-thesis:2010}). Namely,  a Feller operator $P$ will satisfy the {\it Ces\'aro e--property} at $z\in S$ if for any $f\in L_b (S)$ we have 
\begin{equation}\label{e3_9.04.17}
\lim_{x\to z} \limsup_{n\to\infty}\left|\frac{1}{n}\sum_{k=1}^n U^k f (x) - \frac{1}{n}\sum_{k=1}^n U^k f (z)\right|=0.
\end{equation}
Analogously a Feller operator satisfies the {\it Ces\'aro e--property} if it satisfies this property at any $z\in S$.

The following simple example shows that Proposition 6.4.2 in \cite{Meyn-Tweedie:2009} fails.

\begin{example}
Let $S=\{1/n : n\ge 1\}\cup\{0\}$ and let $T: S\to S$ be given by the following formula:
$$
T(0)=T(1)=0\qquad\text{and}\qquad T(1/n)=1/(n-1)\quad\text{for $n\ge 2$}.
$$
The operator $P: \mathcal M(S)\to\mathcal M(S)$ given by the formula $P\mu=T_*(\mu)$ (the pushforward measure) is asymptotically stable but it does not satisfy the e--property at $0$.
\end{example}

Jamison \cite{Jamison:1964} introduced for a Markov operator the property of {\it uniform stability in mean} when $\{U^n f:n\in\mathbb N\}$ is an equicontinuous family of functions in the space of real-vauled continuous function $C(S)$ for every $f\in C(S)$. Here $C$ is a compact metric space. Since the space of bounded Lipschitz functions is dense for the uniform norm in the space of bounded uniform continuous functions, this property coincides with the Ces\'aro e--property for compact metric spaces. Now, if the Markov operator $P$ on the compact metric space is asymptotically stable, with the invariant measure $\mu_*\in\mathcal M_1$, then  
$
\frac{1}{n}\sum_{i=1}^n U^i f\to \langle f, \mu_*\rangle$ pointwise, for every $f\in C(S)$. According to Theorem 2.3 in \cite{Jamison:1964} this implies that $P$ is uniformly stable in mean, i.e., has 
the Ces\'aro e--property.

\begin{example} Let $(k_n)_{n\ge 1}$ be an increasing sequence of prime numbers. Set
$$
S:=\{(\overbrace{\mathstrut 0, \ldots, 0,}^{k_n^i-1 - times} i/k_n, 0,\ldots )\in l^{\infty}: i\in\{0,\ldots, k_n\}, n\in\mathbb N\}.
$$ 
The set $S$ endowed with the $l^{\infty}$-norm $\|\cdot\|_{\infty}$ is a (noncompact) Polish space. Define $T: S\to S$ by the formula
$$
T((0,\ldots ))=T((\overbrace{\mathstrut 0, \ldots, 0,}^{k_n^{k_n}-1- times} 1, 0,\ldots))=(0,\ldots, 0, \ldots)\qquad\text{for $n\in\mathbb N$}
$$
and
$$
T((\overbrace{\mathstrut 0, \ldots, 0,}^{k_n^i-1- times} i/k_n,  0,\ldots ))=(\overbrace{\mathstrut 0, \ldots, 0,}^{k_n^{i+1}-1- times} (i+1)/k_n, 0,\ldots )\quad\text{for $i\in\{1,\ldots, k_n-1\}, n\in\mathbb N$.}
$$
The operator $P: \mathcal M(S)\to\mathcal M(S)$ given by the formula $P\mu=T_*(\mu)$ is asymptotically stable but it does not satisfy the Ces\'aro  e--property at $0$. Indeed, if we take an arbitrary continuous function $f: S\to\mathbb R_+$ such that $f((0,\ldots, 0, \ldots))=0$ and $f(x)=1$ for $x\in S$ such that $\|x\|_{{\infty}}\ge1/2$ we have
$$
\frac{1}{k_n}\sum_{i=1}^{k_n} U^i f((\overbrace{\mathstrut 0, \ldots, 0}^{k_n-1}, 1/k_n, 0,\ldots ))-\frac{1}{k_n}\sum_{i=1}^{k_n} U^i f((0,\ldots))\ge 1/2.
$$
\end{example} 

We are in a position to formulate the main result of our paper:

\begin{thrm}\label{main_theorem} Let $P$ be an asymptotically stable Feller operator and let $\mu_*$ be its unique invariant measure.
If $\Int_S(\supp\mu_*)\neq\emptyset$, then $P$ satisfies the e--property.
\end{thrm}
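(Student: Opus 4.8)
The plan is to exploit asymptotic stability to control $U^n f$ on the support of $\mu_*$, and then propagate that control to points $z$ near the support using the Feller property together with the fact that the support has nonempty interior. Fix $f \in L_b(S)$; without loss of generality assume $0 \le f \le 1$ and $\Lip f \le 1$. The first step is to record the basic consequence of asymptotic stability: for every $\mu \in \CP(S)$ one has $\pair{f}{P^n\mu} = \pair{U^n f}{\mu} \to \pair{f}{\mu_*}$ as $n \to \infty$. In particular, taking $\mu = \delta_x$, we get $U^n f(x) \to \pair{f}{\mu_*}$ \emph{pointwise} in $x$; so the family $\{U^n f\}$ converges pointwise to a constant. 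The obstacle, of course, is that pointwise convergence to a constant is far from equicontinuity — the two examples in the paper show exactly this failure — so the argument must use $\Int_S(\supp\mu_*) \neq \emptyset$ in an essential way.

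The key idea I would pursue is a ``uniform-on-a-ball'' estimate obtained by iterating. Pick $x_0 \in \Int_S(\supp\mu_*)$ and $r>0$ with $\overline{B(x_0,r)} \subset \supp\mu_*$. The heuristic is that, because $\mu_*$ charges every ball inside its support and $P$ is Feller, the functions $U^n f$ cannot oscillate too wildly on $B(x_0,r)$: if $U^{n_k}f$ had oscillation bounded below on $B(x_0,r)$ along a subsequence, one could, using the Feller continuity and a Prokhorov/tightness argument for the sequence $(P^{n}\delta_{x})$ (which is tight since it converges weakly), extract a limit and contradict the fact that $\pair{U^{n}f}{\mu_*} = \pair{f}{\mu_*}$ is constant while simultaneously $U^n f \to \pair{f}{\mu_*}$ pointwise on a set of full $\mu_*$-measure in $B(x_0,r)$. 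Concretely, I would try to show
\begin{equation}\label{eq:unif}
\lim_{n\to\infty} \sup_{x \in B(x_0,r)} \bigl| U^n f(x) - \pair{f}{\mu_*} \bigr| = 0,
\end{equation}
i.e. \emph{uniform} convergence on the interior ball, by combining pointwise convergence with an equicontinuity-type compactness input coming from Feller-ness plus the density of $\supp\mu_*$ near $x_0$. This is the heart of the matter.

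Granting \eqref{eq:unif}, the e--property at an \emph{arbitrary} $z \in S$ would follow by a coupling/return argument: by asymptotic stability the mass of $P^m\delta_z$ and of $P^m\delta_x$ both accumulate near $\supp\mu_*$, and in particular for large $m$ a large fraction of this mass sits in $B(x_0,r)$; writing $U^{n+m}f(x) = \pair{U^n f}{P^m\delta_x}$ and splitting the integral over $B(x_0,r)$ and its complement, the $B(x_0,r)$-part is within $\eps$ of $\pair{f}{\mu_*}$ uniformly in $n$ by \eqref{eq:unif}, while the complementary part carries vanishing mass as $m\to\infty$ uniformly for $x$ in a neighbourhood of $z$ (here one uses the Feller property to get local uniformity of $x \mapsto P^m\delta_x(B(x_0,r)^c)$, or at least its small $\limsup$). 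Hence $\limsup_n |U^n f(x) - U^n f(z)|$ can be made small by first choosing $m$ large and then $x$ close to $z$, which is exactly \eqref{e2_9.04.17}. The main obstacle I anticipate is making \eqref{eq:unif} rigorous: upgrading pointwise convergence of $U^n f$ to uniform convergence on an interior ball requires genuinely using the topological largeness of $\supp\mu_*$ there, and the cleanest route is probably to argue by contradiction with a subsequence, use tightness of $\{P^{n}\delta_{x_0}\}$ and Feller continuity to pass to a limit along the subsequence, and then show the limiting behaviour is inconsistent with $P^n\mu_* = \mu_*$ and with pointwise convergence $\mu_*$-a.e. on the ball.
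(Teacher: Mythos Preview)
Your two-step outline --- get uniform convergence of $U^nf$ on an interior ball, then propagate via $U^{n+m}f(x)=\pair{U^nf}{P^m\delta_x}$ --- matches the skeleton of the paper's argument, but both steps have genuine gaps as written.

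For the propagation step, your claim that ``the complementary part carries vanishing mass as $m\to\infty$'' is simply false: $P^m\delta_x\to\mu_*$ weakly, so by Portmanteau $\liminf_m P^m\delta_x\bigl(B(x_0,r)\bigr)\ge\mu_*\bigl(B(x_0,r)\bigr)$, but nothing forces this to tend to~$1$; the mass outside the ball stays near $\mu_*\bigl(B(x_0,r)^c\bigr)>0$ in general. A single application of $P^m$ therefore only captures some fixed fraction $\alpha<\mu_*(B(x_0,r))$ of the mass in the good ball. The paper repairs this by \emph{iterating}: after $n_1$ steps peel off an $\alpha$-fraction of $P^{n_1}\delta_{x_0}$ supported in the ball, normalise the remainder $\mu_1^{x_0}$, evolve it a further $n_2$ steps and peel off another $\alpha$-fraction, and so on $k$ times until the uncontrolled remainder has mass $(1-\alpha)^k<\varepsilon/(2|f|)$. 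A further point you have not addressed is why this decomposition depends continuously on the starting point: the paper chooses the cutting radii $r_i<r$ so that the spheres $\partial B(z,r_i)$ are continuity sets for the relevant measures, which (via the Alexandrov/Portmanteau theorem and the Feller property) makes each $\nu_i^x,\mu_i^x$ weakly continuous in $x$ near $x_0$. Only then can one compare $P^n\delta_x$ and $P^n\delta_{x_0}$ term by term.

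For the uniform-on-a-ball estimate \eqref{eq:unif}, your compactness sketch (tightness of $\{P^n\delta_{x_0}\}$ plus Feller continuity) does not produce uniformity over $x$ ranging in $B(x_0,r)$, and in a non-locally-compact Polish space there is no reason the convergence on a \emph{pre-chosen} ball should be uniform. The paper does not attempt this. It needs only, for each $\varepsilon>0$, \emph{some} ball $B\subset\supp\mu_*$ and some $N$ with $|U^nf(x)-\pair{f}{\mu_*}|\le\varepsilon/2$ for all $x\in B$ and $n\ge N$, and obtains this by Baire category: taking any open $W\subset\supp\mu_*$, the closed sets $Y_n=\{x\in\overline W:\ |U^mf(x)-\pair{f}{\mu_*}|\le\varepsilon/2\ \text{for all }m\ge n\}$ cover the Baire space $\overline W$, so some $Y_N$ has nonempty interior. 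The ball thus depends on $\varepsilon$ and $f$, but that is harmless for the iterative splitting above.
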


Its proof involves the following lemma:
\begin{lemma}Let $P$ be an asymptotically stable Feller operator and let $\mu_*$ be its unique invariant measure. Let $U$ be dual to $P$. If $\Itr_S(\supp\mu_*)\neq\emptyset$, then for every $f\in C_b(S)$ and any $\varepsilon>0$ there exists a ball $B\subset\supp\mu_*$ and $N\in\NN$ such that
\begin{equation}\label{e1_19.11.16}
|U^nf(x)-U^n f(y)|\le\varepsilon\qquad\text{for any $x, y\in B$,\ $n\geq N$}.
\end{equation}
\end{lemma}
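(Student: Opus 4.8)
The plan is to use asymptotic stability to force pointwise convergence of the iterates $U^nf$ to a constant, and then to upgrade this to \emph{local uniform} convergence on a ball sitting inside $\supp\mu_*$ via the Baire category theorem.

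First I would record the pointwise limit. For each $x\in S$ the Dirac mass $\delta_x$ lies in $\CP(S)$, so asymptotic stability gives $P^n\delta_x\to\mu_*$ weakly; since $P$ is Feller, hence regular, iterating $\pair{f}{P\mu}=\pair{Uf}{\mu}$ yields $\pair{f}{P^n\delta_x}=\pair{U^nf}{\delta_x}=U^nf(x)$, and therefore
$$
\lim_{n\to\infty}U^nf(x)=\pair{f}{\mu_*}=:c_f\qquad\text{for every }f\in C_b(S),\ x\in S.
$$
Now fix $f\in C_b(S)$ and $\varepsilon>0$, and for $m\in\NN$ set $E_m:=\bigcap_{n\ge m}\{x\in S:\ |U^nf(x)-c_f|\le\varepsilon/2\}$. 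Because $P$ is Feller, each $U^nf$ is continuous, so each $E_m$ is closed; the sequence $(E_m)_m$ is increasing, and $\bigcup_m E_m=S$ by the pointwise convergence above.

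Next I would apply Baire category on the open set $V:=\Int_S(\supp\mu_*)$, which is nonempty by hypothesis. As a nonempty open subspace of the Polish (in particular Baire) space $S$, the set $V$ is itself a Baire space. The relatively closed sets $E_m\cap V$ cover $V$, so some $E_N\cap V$ has nonempty interior in $V$; since $V$ is open in $S$, that interior is open in $S$ as well, hence contains an open ball $B$. By construction $B\subseteq V\subseteq\supp\mu_*$ and $B\subseteq E_N$, so for all $x,y\in B$ and $n\ge N$,
$$
|U^nf(x)-U^nf(y)|\le|U^nf(x)-c_f|+|c_f-U^nf(y)|\le\varepsilon,
$$
which is exactly \eqref{e1_19.11.16}.

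The substantive point — and the only place one has to be a little careful — is the Baire step: one must run it inside $V=\Int_S(\supp\mu_*)$ rather than in $S$, precisely so that the ball produced is guaranteed to lie in $\supp\mu_*$ as the statement demands; everything else (the identity $\pair{f}{P^n\delta_x}=U^nf(x)$, closedness of the $E_m$, the final triangle inequality) is routine.
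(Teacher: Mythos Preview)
Your proof is correct and follows essentially the same approach as the paper: both establish pointwise convergence $U^nf\to\langle f,\mu_*\rangle$, define the closed level sets $\{x:|U^mf(x)-\langle f,\mu_*\rangle|\le\varepsilon/2\ \text{for all}\ m\ge n\}$, and apply the Baire category theorem inside the interior of $\supp\mu_*$ to extract a ball where the oscillation is small. The only cosmetic difference is that the paper runs Baire on the closure $Y=\overline{W}$ of an open $W\subset\supp\mu_*$ (a closed, hence complete, subspace), whereas you run it directly on the open set $V=\Int_S(\supp\mu_*)$ (an open subspace of a Polish space, hence Baire); your choice is arguably cleaner since the resulting relatively open set is automatically open in $S$.
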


\begin{proof} Fix $f\in C_b(S)$ and $\varepsilon>0$. Let $W$ be an open set such that $W\subset\supp\mu_*$. Set $Y={\overline{W}}$ and observe that the subspace $Y$ is a Baire space. Set
$$
Y_n:=\{x\in Y: |U^mf(x)-\langle f, \mu_*\rangle | \le\varepsilon/2\,\,\, \text{for all $m\ge n$}\}
$$
and observe that $Y_n$ is closed and
$$
Y=\bigcup_{n=1}^{\infty} Y_n.
$$
By the Baire category theorem there exist $N\in\mathbb N$ such that $\Itr_Y Y_{N}\neq\emptyset.$
Thus there exists a set $V\subset Y_N$ open in the space $Y$ and consequently a ball  $B$ in $S$ such that
$B\subset Y_N\subset\supp\mu_*$. Since 
$$
|U^nf(x)-\langle f, \mu_*\rangle | \le\varepsilon/2\qquad\text{for any $x\in B$ and $n\ge N$},
$$
condition (\ref{e1_19.11.16}) is satisfied.
\end{proof}

We are ready to prove Theorem \ref{main_theorem}.
\begin{proof} Assume, contrary to our claim, that $P$ does not satisfy the e--property. Therefore there exist a function $f\in C_b(S)$ and a point $x_0\in S$ such that
$$
\limsup_{x\to x_0} \limsup_{n\to\infty}|U^n f(x)-U^n f(x_0)|>0.
$$
Choose $\varepsilon>0$ such that
$$
\limsup_{x\to x_0} \limsup_{n\to\infty}|U^n f(x)-U^n f(x_0)|\ge 3\varepsilon.
$$
Let $B:=B(z, 2r)$ be a ball such that condition (\ref{e1_19.11.16}) holds. Since $B(z, r)\subset\supp\mu_*$, we have $\gamma:=\mu_*(B(z, r))>0$. Choose $\alpha\in (0, \gamma)$. Since the operator $P$ is asymptotically stable, we have
\begin{equation}\label{e2_19.11.16}
\liminf_{n\to\infty}P^n\mu (B(z, r))>\alpha\qquad\text{for all $\mu\in\CP(S)$},
\end{equation}
by the Alexandrov theorem (see \cite{Ethier_Kurtz}).

Let $k\ge 1$ be such that $2(1-\alpha)^k|f|<\varepsilon$. By induction we are going to define two sequences of measures $(\nu^{x_0}_i)_{i=1}^k$, 
$(\mu^{x_0}_i)_{i=1}^k$ and a sequence of integers $(n_i)_{i=1}^k$ in the following way: let $n_1\ge 1$ be such that
\begin{equation}
P^{n_1}\delta_{x_0}(B(z, r))>\alpha. 
\end{equation}
Choose $r_1<r$ such that $ P^{n_1}\delta_{x_0}(B(z, r_1))>\alpha$ and $P^{n_1}\delta_{x_0}(\partial B(z, r_1))=0$ and set 
\begin{equation}
\nu^{x_0}_1(\cdot)=\frac{P^{n_1}\delta_{x_0}(\cdot\cap B(z, r_1))}{P^{n_1}\delta_{x_0}(B(z, r_1))}
\end{equation}
and
\begin{equation}
\mu^{x_0}_1(\cdot)=\frac{1}{1-\alpha}\left(P^{n_1}
\delta_{x_0}(\cdot)-\alpha\nu_1^{x_0} (\cdot)\right).
\end{equation}
Assume that we have done it for $i=1,\ldots, l$, for some $l<k$. Now let $n_{l+1}$ be such that
\begin{equation}
P^{n_{l+1}}\mu_l^{x_0}(B(z, r))>\alpha. 
\end{equation}
Choose $r_{l+1}<r$ such that $ P^{n_{l+1}}\mu_l^{x_0}(B(z, r_{l+1}))>\alpha$ and $P^{n_{l+1}}\mu_l^{x_0}(\partial B(z, r_{l+1}))=0$ and set 
\begin{equation}
\nu^{x_0}_{l+1}(\cdot)=\frac{P^{n_{l+1}}\mu_l^{x_0}(\cdot\cap B(z, r_{l+1}))}{P^{n_{l+1}}\mu_l^{x_0}(B(z, r_{l+1}))}
\end{equation}
and
\begin{equation}
\mu^{x_0}_{l+1}(\cdot)=\frac{1}{1-\alpha}\left(P^{n_{l+1}}
\mu^{x_0}_{l}(\cdot)-\alpha\nu_{l+1}^{x_0} (\cdot)\right).
\end{equation}
We are done. We have
$$
\begin{aligned}
P^{n_1+\ldots+n_k}\delta_{x_0}(\cdot)&=\alpha P^{n_2+\ldots+n_k}\nu^{x_0}_1(\cdot)
+\alpha (1- \alpha)
P^{n_3+\ldots+n_k}\nu^{x_0}_2(\cdot)
+\ldots + \\
&+\alpha (1- \alpha )^{k-1}
\nu^{x_0}_k (\cdot)+(1-\alpha)^k\mu^{x_0}_k (\cdot).
\end{aligned}
$$
By induction we check that $\nu_i^x-\nu_i^{x_0}\to 0$ and $\mu_i^x-\mu_i^{x_0}\to 0$ weakly as $d(x, x_0)\to 0$.
Indeed, if $i=1$, then $\nu_1^x-\nu_1^{x_0}\to 0$ weakly (as $d(x, x_0)\to 0$), by the fact that $P$ is a Feller operator and $\lim_{d(x, x_0)\to 0}P^{n_1}\delta_{x}(B(z, r_1))=P^{n_1}\delta_{x_0}(B(z, r_1))$, by the Alexandrov theorem due to the fact that $P^{n_1}\delta_{x_0}(\partial B(z, r_1))=0$. On the other hand, the weak convergence $\mu_1^x-\mu_1^{x_0}\to 0$ as $d(x, x_0)\to 0$ follows directly from the definition of $\mu_1^x$. Moreover, observe that for $x$ sufficiently close to $x_0$ we have $P^{n_1}\delta_{x}(B(z, r))>\alpha$ and therefore $\mu_1^x\in\CP(S)$.

Assume now that we have proved that $\nu_i^x-\nu_i^{x_0}\to 0$ and $\mu_i^x-\mu_i^{x_0}\to 0$ weakly as $d(x, x_0)\to 0$ for $i=1,\ldots, l$. We show that $\nu_{l+1}^x-\nu_{l+1}^{x_0}\to 0$ and $\mu_{l+1}^x-\mu_{l+1}^{x_0}\to 0$ weakly as $d(x, x_0)\to 0$ too. Analogously, $\lim_{d(x, x_0)\to 0}P^{n_{l+1}}\mu_{l}^{x}(B(z, r_{l+1}))=P^{n_{l+1}}\mu_{l}^{x_0}(B(z, r_{l+1}))$, by the Alexandrov theorem due to the fact that $P^{n_{l+1}}\mu_{l}^{x_0}(\partial B(z, r_{l+1}))=0$ and from the definition of $\nu_{l+1}^x$ we obtain that $\nu_{l+1}^x-\nu_{l+1}^{x_0}\to 0$ weakly as $d(x, x_0)\to 0$. The weak convergence $\mu_{l+1}^x-\mu_{\l+1}^{x_0}\to 0$ as $d(x, x_0)\to 0$ follows now directly from the definition of $\mu_{l+1}^x$ and for $x$ sufficiently close to $x_0$ we have $P^{n_{l+1}}\mu_{l}^{x}(B(z, r))>\alpha$ and therefore $\mu_{l+1}^x\in\CP(S)$. We are done.

Observe that for any $x$ sufficiently close to $x_0$ and  all $n\ge n_1+\ldots+n_k$ we have
$$
\begin{aligned}
P^{n}\delta_{x}(\cdot)&=\alpha P^{n-n_1}\nu^{x}_1(\cdot)
+\alpha (1- \alpha)
P^{n-n_1-n_2}\nu^{x}_2(\cdot)
+\ldots \\
&+\alpha (1- \alpha )^{k-1}
P^{n-n_1-\ldots-n_k}\nu^{x}_k (\cdot)+(1-\alpha)^k P^{n-n_1-\ldots-n_k}\mu^{x}_k (\cdot),
\end{aligned}
$$
where $\supp\nu^x_i\subset B(z, r)$ for all $i=1,\ldots, k$. Thus
\begin{align}
\limsup_{n\to\infty}|\langle f, P^n\nu^x_i\rangle-\langle f, P^n\nu^{x_0}_i\rangle|&=\limsup_{n\to\infty} |\langle U^nf-\langle f, \mu_*\rangle, \nu^x_i\rangle-\langle U^nf -\langle f, \mu_*\rangle, \nu^{x_0}_i\rangle|\nonumber\\
&\le \varepsilon/2+\varepsilon/2=\varepsilon\label{e_20.11.16} 
\end{align}
for all $i=1,\ldots, k$ and $x$ sufficiently close to $x_0$. Hence
$$
\begin{aligned}
3\varepsilon<\limsup_{x\to x_0} \limsup_{n\to\infty}&|U^n f(x)-U^n f(x_0)|=
\limsup_{x\to x_0} \limsup_{n\to\infty}|\langle f, P^n\delta_x\rangle - \langle f, P^n\delta_{x_0}\rangle|\\
&\le \varepsilon (\alpha+\alpha (1-\alpha)+\ldots \alpha (1-\alpha)^{k-1}) +2(1-\alpha)^k |f|\\
&\le \varepsilon+\varepsilon=2\varepsilon,
\end{aligned} 
$$
which is impossible. This completes the proof.
\end{proof}
{\bf Acknowledgements.}\ We thank Klaudiusz Czudek for providing us with Example 1.


\begin{thebibliography}{99}
 

\bibitem{Czapla: 2012} Czapla, D. (2012), A criterion of asymptotic stability for Markov-Feller e-chains on Polish spaces, \textit{Ann. Polon. Math.} {\bf 105}, no. 3, 267--291.


\bibitem{CzaplaHorbacz: 2014} Czapla, D. and K. Horbacz (2014), Equicontinuity and stability properties of Markov chains arising from iterated function systems on Polish spaces, \textit{Stoch. Anal. Appl.} {\bf 32}, no. 1, 1--29.



\bibitem{Ethier_Kurtz} Ethier, S. N. and Kurtz, T. G. (1985).
\textit{Markov Processes: Characterization and Convergence}
Wiley, New York.




\bibitem{EsS-vonRenesse: 2012} Es-Sarhir, A.  and M. K. von Renesse (2012),  Ergodicity of stochastic curve shortening flow in the plane, {\it SIAM J. Math. Anal.} {\bf 44}, no. 1, 224--244.

\bibitem{GongLiu: 2015} Gong, F.Z. and Y. Liu (2015), 
Ergodicity and asymptotic stability of Feller semigroups on Polish metric spaces, \textit{Sci. China Math.} {\bf 58}, no. 6, 12351250. 



\bibitem{Jamison:1964} Jamison, B. (1964), Asymptotic behaviour of succesive iterates of continuous functions under a Markov operator, \textit{J. Math. Anal. Appl.} {\bf 9}, 203--214.


\bibitem{Komorowksi_ea:2010} Komorowski, T., Peszat, S. and T. Szarek (2010), On ergodicity of some Markov operators, \textit{Ann. Prob.} {\bf 38}(4), 1401-1443.

\bibitem{Lasota-Szarek:2006} Lasota, A. and T. Szarek (2006), Lower bound technique in the theory of a stochastic differential equations, {\it J. Diff. Equ.} {\bf 231}, 513--533.


\bibitem{LiTuJe: 2015}  Li, J., Tu, S. and X. Ye (2015),
Mean equicontinuity and mean sensitivity,
{\it Ergod. Th. \& Dynam. Sys.} {\bf 35}, no. 8, 2587--2612. 

\bibitem{Meyn-Tweedie:2009} Meyn, S. P. and R. L. Tweedie (2009), \textit{Markov Chains and Stochastic Stability}, Cambridge
University Press, Cambridge.




\bibitem{Stettner: 1994} Stettner, \L. (1994), Remarks on ergodic conditions for Markov processes on Polish spaces, \textit{Bull. Polish Acad. Sci. Math.} {\bf 42}, no. 2, 103114.

\bibitem{Szarek-Worm:2012} Szarek, T. and D.T.H. Worm (2012), Ergodic measures of Markov semigroups with the e-property, {\it Ergod. Th. \& Dynam. Sys.} {\bf 32}(3), 1117--1135.

\bibitem{WW: 2017} W\c edrychowicz, S. and A. Wi\'snicki (2017), On some results on the stability of Markov operators, \textit{Studia Math.}, \textit{to appear}.

\bibitem{Worm-thesis:2010} Worm, D.T.H. (2010), \textit{Semigroups on spaces of measures}, PhD thesis, Leiden University.
 
\bibitem{Zaharopol:2014} Zaharopol, R. (2014), \textit{Invariant Probabilities of Transition Functions}, in the series `Probability and Its Applications', Cham: Springer.

\end{thebibliography}
\end{document}